\newtheorem*{thm*}{Theorem}
\newtheorem{thm}{Theorem}[section]
\newtheorem{cor}[thm]{Corollary}
\newtheorem{pro}[thm]{Proposition}
\newtheorem{deff}[thm]{Definition}
\newtheorem{lem}[thm]{Lemma}
\newtheorem{rem}[thm]{Remark}
\newcommand{\nc}{\newcommand}
\nc{\cc}{\D{C}} \nc{\hh}{\D{H}} \nc{\nn}{\D{N}} \nc{\oo}{\D{O}}
\nc{\qq}{\D{Q}}
 \nc{\rr}{\D{R}}
\nc{\zz}{\D{Z}} \nc{\livre}{\ast}
\nc{\barr}{\begin{array}} \nc{\earr}{\end{array}}
\nc{\bthm}{\begin{thm}} \nc{\ethm}{\end{thm}}
\nc{\bpro}{\begin{pro}} \nc{\epro}{\end{pro}}
\nc{\blem}{\begin{lem}} \nc{\elem}{\end{lem}}
\nc{\bins}{\begin{ins}} \nc{\eins}{\end{ins}}
\nc{\bcor}{\begin{cor}} \nc{\ecor}{\end{cor}}
\nc{\brem}{\begin{rem}} \nc{\erem}{\end{rem}}
\nc{\bdeff}{\begin{deff}} \nc{\edeff}{\end{deff}}
\nc{\bea}{\begin{eqnarray}} \nc{\eea}{\end{eqnarray}}
\nc{\D}[1]{{\mathbb#1}}
\def\R{\rm I\kern -.2em R}
\def\N{\rm I\kern -.18em N}
\def\Z{\rm Z\kern -.332em Z}
\def\de{\rm [\kern -.15em [}
\def\dd{\rm ]\kern -.15em ]}
\def\||{\hspace{0.15cm}|\hspace{0.15cm}}
\title{Subgroup Conjugacy Separability in Residually Free Groups }
\author[1]{S. C. Chagas\thanks{partially supported by Capes}}
\author[2]{I. Kazachkov\thanks{partially supported by Basque Government grant IT1483-22}}
\affil[1]{UnB - University of Brasília,Campus Universitário Darcy Ribeiro, Brasília-DF | CEP 70910-900, Brasil; \ \ email: \texttt{sheilachagas@gmail.com}}
\affil[2]{Ikerbasque - Basque Foundation for Science and Matematika Saila,  UPV/EHU,  Sarriena s/n, 48940, Leioa - Bizkaia, Spain; \ \  email:  \texttt{ilya.kazachkov@gmail.com}}
\date{}                     
\begin{document}
\sloppy
\maketitle

\abstract{ 
	We prove that finitely presented residually free  groups are subgroup conjugacy separable. Furthermore, if they are of type $FP_\infty$, then  they are also subgroup conjugacy distinguished. 
	
	Using a connection between conjugacy separability and residual finiteness of outer automorphism group established by Grossman in \cite{Grossman}, we show that finitely presented residually free groups have residually finite outer automorphism groups.
}

\section{Introduction}

The three fundamental decision problems for groups: the word, conjugacy and the isomorphism problems, were formulated by Max Dehn in the early 1900's and since then have shaped entire areas of group theory.

One of the first important decidability results is due to A.~I.~Mal'cev, who observed a relation between decidability of the word problem and its residual properties, see \cite{M}. He proved that if a finitely presented group is residually finite, then it has decidable word problem.

In order to apply Mal'cev's argument and solve the conjugacy problem a stronger residual property is required. A group $G$ is said to be conjugacy separable if one can distinguish its conjugacy classes by looking at finite quotients of $G$. If a finitely presented group is conjugacy separable, then it has decidable conjugacy problem.

Another natural generalisation of residually finiteness that extends from elements to subgroups, is subgroup separability. A group $G$ is subgroup separable if one can distinguish its finitely generated subgroups by looking at its finite quotients. This property is more general and assures the decidability of the membership problem in $G$. 

Just as conjugacy separability generalises residual finiteness, the notion of subgroup conjugacy separability introduced by O.~Bogopolski and F.~Grunewald in \cite{BG} generalises the notion of conjugacy separability. A group $G$ is said to be subgroup conjugacy separable if for every pair of non-conjugate finitely generated subgroups $H$ and $K$ of $G$ there exists a finite quotient of $G$ where the images of these subgroups are not conjugate.  

Finitely presented subgroup conjugacy separable groups have solvable conjugacy problem for finitely generated subgroups. The latter means, that there is an algorithm which given a finitely presented subgroup conjugacy separable group $G = \langle X \mid R \rangle$ and two finite sets of elements $Y$ and $Z$ decides whether or not the subgroups $ \langle Y\rangle$ and $\langle Z\rangle$ are conjugate in $G$.

In this paper we study subgroup conjugacy separability in residually free groups. Residually free groups is a prominent class of groups that provides a context for a rich and powerful interplay among group theory, topology and logic. 

By definition, a residually free group embeds in a direct product of (perhaps, infinitely many) free groups. For example, the fundamental group of a closed orientable surface is residually free, but it cannot be embedded in a finite direct product if it has negative Euler characteristic. However, it is shown in \cite{BMR}, see also \cite{BHMS} that one can embed finitely generated residually free groups into a finite product of fully residually free groups. 

In turn, fully residually free groups have been extensively studied in connection with Tarski?s problems on the first order logic of free groups and they appear under different guises: as groups having the same existential theory as free groups, as coordinate groups of irreducible varieties as well as so-called limit groups.

A large body of work on the structure of residually free groups culminated in an important paper of Bridson-Howie-Miller-Short \cite{BHMS}. One of the outcomes of their work is a stark contrast between finitely generated residually free groups and residually free groups that satisfy higher finiteness properties. For instance, while the membership and conjugacy problem is undecidable for finitely generated residually free groups, they are decidble for finitely presented ones, see \cite{BHMS}.

The results of this paper constitute a furhter step in this direction and can be regarded as a generalisation of the aforementioned result from \cite{BHMS} as we prove

\begin{thm*}
	Let $G$ be a finitely presented residually free group. Then   $G$ is finitely presented subgroup conjugacy separable.
\end{thm*}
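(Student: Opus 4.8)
The plan is to reduce the problem to the structure theory of finitely presented residually free groups established in \cite{BHMS}. By that work, a finitely presented residually free group $G$ embeds (with finite-index-controlled behaviour) into a direct product $D = F_1 \times \cdots \times F_n$ of finitely many limit groups, and $G$ is \emph{virtually} a subdirect product of the $F_i$ that is, moreover, of type $FP_\infty$ by the $1$-$2$-$3$ Theorem and its generalisations; in particular $G$ contains a finite-index subgroup which is \emph{full} in $D$, meaning it intersects each factor in a nontrivial (hence finite-index-commensurated) subgroup. I would first record the standing reduction: it suffices to prove subgroup conjugacy separability for $G$ itself, and then leverage the fact that limit groups are conjugacy separable, subgroup separable (LERF), and conjugacy distinguished, together with the fact that in limit groups the set of conjugates of a fixed finitely generated subgroup is separable and centralisers/normalisers of finitely generated subgroups are well-behaved (all of this being available in the literature on limit groups, e.g. via their being hyperbolic relative to abelian subgroups / cocompact special).

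\smallskip

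The core of the argument will be a two-step separation. Given finitely generated subgroups $H, K \le G$ that are \emph{not} conjugate in $G$, I want to produce a finite quotient $G \twoheadrightarrow Q$ in which the images are non-conjugate. Step one is to handle the ``coordinate-wise'' obstruction: let $p_i : D \to F_i$ be the projections. If for some $i$ the subgroups $\overline{p_i(H)}$ and $\overline{p_i(K)}$ (closures in the profinite topology, or rather their images) are non-conjugate in $F_i$, then because $F_i$ is subgroup conjugacy separable (limit groups are, being, e.g., locally quasiconvex hyperbolic-relative-to-abelians and virtually special, so one can invoke subgroup conjugacy separability for them — this is the input I would cite or prove separately as a lemma) we get a finite quotient of $F_i$, pull it back along $G \hookrightarrow D \to F_i$ and are done. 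Step two, the genuinely hard case, is when $p_i(H)$ and $p_i(K)$ \emph{are} conjugate in each $F_i$, say $p_i(K) = p_i(H)^{g_i}$, but no single $g = (g_1,\dots,g_n) \in D$ simultaneously conjugates $H$ to $K$ inside $G$ — equivalently the coset $\{g \in D : H^g = K\}$ (a coset of the normaliser $N_D(H)$, which is itself a subdirect-type subgroup because $H$ is ``full enough'') is disjoint from $G$. Here one must separate a point from a subgroup-coset in the profinite topology on $D$; the key is that $G$ is a \emph{virtual retract} / has separable image, or more precisely that $N_D(H) \cap G$ has finite index in $N_D(H)$ when $H$ is full, so that the relevant coset meets $G$ iff a finite obstruction vanishes, and that obstruction is detected in a finite quotient of $D$ since $D$ (a product of limit groups) is conjugacy separable and LERF and $N_D(H)$ is separable.

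\smallskip

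Concretely, the key steps in order: (1) reduce to $G$ full in $D = F_1 \times \cdots \times F_n$ via \cite{BHMS}, keeping track that finite-index overgroups/subgroups preserve the conclusion (a standard but necessary lemma, using that subgroup conjugacy separability passes to and from finite-index subgroups for finitely generated subgroups); (2) prove/cite that limit groups are subgroup conjugacy separable and that finitely generated subgroups of limit groups have separable normalisers and that their conjugacy classes of finitely generated subgroups are separable; (3) prove the product $D$ is subgroup conjugacy separable for \emph{full} subgroups, reducing the normaliser $N_D(H)$ to a product of the $N_{F_i}(p_i H)$ up to finite index using fullness; (4) transfer from $D$ to $G$: show that for full $H \le G$, $N_G(H) = N_D(H) \cap G$ has finite index in $N_D(H)$, and that $G$ is separable in every relevant coset space — this uses that finitely presented residually free groups are subgroup separable (which itself follows from \cite{BHMS}: they are virtually subdirect products of $FP_\infty$ type and such are LERF by results on products of limit groups / Bridson–Wilton type separability). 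The main obstacle I expect is precisely step (4) combined with the passage in step (3): controlling the normaliser of a full subgroup in a direct product and showing the ``simultaneous conjugator'' can be found in a finite quotient requires a careful analysis of how conjugacy in $D$ interacts with the subdirect structure of $G$ — essentially one needs a conjugacy-separability statement for the pair $(D, G)$ that is uniform over the finitely many factors, and the fullness hypothesis is what makes the normalisers large enough for the profinite-topology separation arguments to go through. Everything else is either cited structure theory or routine finite-index bookkeeping.
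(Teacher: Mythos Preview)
Your overall architecture --- embed $G$ in a product $D$ of limit groups, first separate via the coordinate projections using subgroup conjugacy separability of limit groups, then in the ``matched projections'' case analyse the coset of the normaliser and pull a conjugator from $D$ back into $G$ --- is the same skeleton the paper uses. Proposition~\ref{teo1} is your Step~1 plus the start of Step~2, and Theorem~\ref{teo2} is the transfer from $D$ to $G$.

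However, your Steps~(3)--(4) have a real gap. You propose to control $N_D(H)$ as a product of the $N_{F_i}(p_iH)$ ``up to finite index using fullness'' and then assert that $N_G(H)$ has finite index in $N_D(H)$; neither statement is available in the generality you claim, and LERF of $D$ alone does not give separability of cosets of normalisers. The paper's substitute for this entire portion is the Bridson--Wilton theorem \cite[Theorem~8]{BW}: a \emph{finitely presented} full subdirect product of limit groups contains a term $\gamma_m(L)$ of the lower central series of $L$. This is applied both to $G$ and to the subgroups $G_1,G_2$ (note that the theorem is about \emph{finitely presented} subgroups, not merely finitely generated ones as you wrote --- the distinction is essential here), and it reduces every hard separability statement --- conjugacy of $G_1,G_2$ in $L$ (via Segal's polycyclic theory \cite{Segal}), density of $N_L(G_1)$ in $N_{\widehat L}(\widehat{G_1})$ (Lemma~\ref{lemaNdenso}, via \cite{RSZ}), and closedness of $N_L(G_1)G$ in $L$ (via \cite{LW}) --- to questions about the finitely generated nilpotent quotient $L/\gamma_m(L)$. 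Without this reduction your coset--separation and finite-index claims are unsupported: fullness by itself does not force $N_D(H)\cap G$ to have finite index in $N_D(H)$, and nothing in your outline replaces the passage to the polycyclic quotient that actually closes the argument.
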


Moreover, we show that residually free group $G$ of type  $FP_\infty$  are also subgroup conjugacy distinguished, see Definition \ref{def:sub_conj_dist} and Theorem \ref{RFisCD} for the precise statement.

Note that if $G$ is residually finite and finitely presented, then for any finitely generated conjugacy distinguished subgroup $H$ of $G$ there exists an algorithm that decides if a given element $g$ of $G$ is conjugate to some element of $H$.

Finally, we establish residual finiteness of the outer automorphism group of a finitely presented residually free group, see Corollary \ref{cor:OutGResFinite}. In order to do so we use a result of Grossman, see \cite{Grossman}, who proved that if $G$ is a finitely generated conjugacy separable group such that all its pointwise inner automorphisms are inner, then $Out(G)$ is residually finite; we establish this for finitely generated residually free groups.

\section{Residually free groups of type $FP_\infty$} 

In this section we study conjugacy distinguished subgroups of residually free groups. We first prove that virtual retracts of hereditarily conjugacy separable groups are conjugacy distguished and then show that subgroups of type $FP_m$ (for large enough $m$) over $\mathbb{Q}$ of a finitely presented residually free group are conjugacy distinguished.

\medskip

We say that a group $G$ is hereditarily conjugacy separable if every finite index subgroup of $G$  is conjugacy separable.

Recall that a subgroup $H$ of a group $K$ is called a retract if there is a homomorphism $\rho : K \rightarrow H$ which restricted to $H$ is the identity map. This is equivalent to $K$ splitting as a semidirect product $N\rtimes H$, where $N = ker\,\rho$. In this case the map $\rho$ is called a retraction of $K$ onto $H$.

\begin{deff}\label{Def:VR} 
Let $G$ be a group and let $H$ be a subgroup of $G$. We  say that $H$ is a virtual retract of $G$ if there exists a subgroup $K\leq G$ such that $|G : K| < \infty$, $H \subseteq  K$ and $H$ is a retract of $K$.
\end{deff}

Given a group $G$, the profinite topology on $G$ has as a basis all the cosets of finite index subgroups of $G$. Each such coset is both open and closed in the profinite topology.
While the separability properties of $G$ can be described in terms of the finite quotients of $G$, it is sometimes more convenient to talk about the profinite topology instead. For instance, $G$ is called residually finite if for any $1\ne  g \in G$, there exists a finite quotient of $G$ in which $g$ does not map to the identity element. In other words, there is a finite index normal subgroup $N$ of $G$ such that $g \notin N$. Equivalently, this means that $\{1\}$ is closed in the profinite topology of $G$. In fact, this is the same as saying that any one element subset of $G$ is closed or, the seemingly stronger statement, that $G$ is Hausdorff.

The group $G$ is called subgroup separable if for every finitely generated subgroup $H$ of $G$ and every $g \in  G \setminus H$ there exists a normal subgroup of finite index, $N$ of $G$ such that $g \notin  HN$. Thus, there is a finite quotient of $G$ in which the images of $g$ and $H$ are disjoint. As before, this is the same as saying that every finitely generated subgroup is closed in the profinite topology of $G$.
More generally, a subset $S \subseteq  G$ is called separable if for every $g \notin S$, there is a finite quotient of $G$ in which $g$ and $S$ have disjoint images. Equivalently, $S$ is separable if it is closed.

A group $G$ is called conjugacy separable if the conjugacy classes of elements are separable.
If we have a subgroup $H$ of $G$, then there are two possible topologies one can put on $H$. Namely, the subspace topology and the profinite topology of $H$ itself. In general the subspace topology may be more coarse, but not if $H$ has finite index.

In the above notation, denote by $\widehat{ G}$ the profinite completion of $G$ and by $\overline{ H}$ the closure of $H$ in $\widehat{G}$.

\begin{deff} \label{def:sub_conj_dist}
	A group $G$ is called  subgroup conjugacy distinguished if for any finitely generated  subgroup $H$ of $G$ and any element $x\in G$ such that $x$ is not conjugate into $H$ there exists a finite quotient of $G$ where the image of $x$ is not conjugate into the image of $H$.
\end{deff}

A subgroup $H$ of a group $G$ is said to be conjugacy distinguished if $\cup_{g\in G} H^g$  is closed in the profinite topology of $G$. Equivalently, $H$ is conjugacy distinguished if whenever $y$ is not conjugate to an element of $H$, there exists a finite quotient of $G$ where the image of $y$ is not conjugate to an element of the image of $H$.

\begin{thm}\label{Thmconjdist} 
	Let $G$ be a  hereditarily conjugacy separable group,  and $H$  a virtual retract of $G$.  Then $H$ is conjugacy distinguished.
\end{thm}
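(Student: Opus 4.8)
The plan is to reduce the statement to the finite-index subgroup furnished by the virtual retract, and then to prove the genuinely new content: a retract of a conjugacy separable group is conjugacy distinguished in it. Fix, as in Definition~\ref{Def:VR}, a subgroup $K\le G$ with $|G:K|<\infty$, $H\subseteq K$, and a retraction $\rho\colon K\to H$. Since $[G:K]<\infty$, the profinite topology of $K$ coincides with the subspace topology induced from $\mathrm{PT}(G)$, and $K$ is itself closed in $\mathrm{PT}(G)$; hence any subset of $K$ that is closed in $\mathrm{PT}(K)$ is closed in $\mathrm{PT}(G)$. Choosing right coset representatives $G=\bigcup_{i=1}^{m}Kg_i$ one gets
\[
\bigcup_{g\in G}H^{g}=\bigcup_{i=1}^{m}\Bigl(\;\bigcup_{k\in K}H^{k}\Bigr)^{g_i},
\]
a finite union of images of $\bigcup_{k\in K}H^{k}\subseteq K$ under the homeomorphisms $x\mapsto x^{g_i}$ of $(G,\mathrm{PT})$. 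Thus it suffices to show that $\bigcup_{k\in K}H^{k}$ is closed in $\mathrm{PT}(K)$, i.e. that $H$ is conjugacy distinguished in $K$; and $K$, being of finite index in the hereditarily conjugacy separable group $G$, is itself (hereditarily) conjugacy separable.

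Working now inside $K$, set $N=\ker\rho$, so $K=N\rtimes H$ and $\rho$ is idempotent. The elementary but crucial observation is that, for $y\in K$,
\[
y \text{ is conjugate in }K\text{ into }H \iff y\sim_{K}\rho(y):
\]
indeed, if $y^{g}=h'\in H$ then applying $\rho$ yields $\rho(y)^{\rho(g)}=h'$, so $y\sim_K h'\sim_K\rho(y)$; the converse is immediate since $\rho(y)\in H$. The same equivalence will hold verbatim in any quotient of $K$ through which $\rho$ factors.

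So, assuming $y$ is not conjugate into $H$, we have $y\not\sim_K\rho(y)$, and conjugacy separability of $K$ provides a finite-index normal $M_0\trianglelefteq K$ with $y\not\sim\rho(y)$ modulo $M_0$. The point is that $M_0$ need not be $\rho$-invariant, so I would replace it by $M:=M_0\cap\rho^{-1}(M_0)$; using $\rho^2=\rho$ one checks that $M$ is normal of finite index in $K$, that $\rho(M)\subseteq M$, and that $M\subseteq M_0$, so $y$ and $\rho(y)$ are still non-conjugate modulo $M$. Because $M$ is $\rho$-invariant, $\rho$ descends to a retraction $\bar\rho$ of $Q:=K/M$ onto the image $\bar H$ of $H$, and applying the criterion inside the finite group $Q$ shows that the image of $y$ is conjugate into $\bar H$ iff it is conjugate to $\bar\rho(\bar y)$, which equals the image of $\rho(y)$ — exactly what we arranged to fail. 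Hence $Q$ is a finite quotient witnessing that $H$ is conjugacy distinguished in $K$, and, by the first paragraph, in $G$.

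The only real obstacle is precisely that an arbitrary finite quotient separating $y$ from $\rho(y)$ need not respect the retraction, so the "conjugate iff conjugate to $\rho(y)$" criterion does not transfer to the quotient for free; it is the idempotency of $\rho$, exploited through the refinement $M_0\cap\rho^{-1}(M_0)$, that repairs this, and everything else is routine bookkeeping with the profinite topology on finite-index subgroups.
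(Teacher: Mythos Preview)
Your proof is correct and rests on the same key observation as the paper's: $y$ is $K$-conjugate into $H$ if and only if $y\sim_K\rho(y)$, so conjugacy separability of the finite-index overgroup $K$ does all the work. The only difference is packaging. The paper works in the profinite completion: assuming $g^\gamma\in\overline H$ for some $\gamma\in\widehat G$, it reduces to $\gamma\in\widehat U$ and $g\in U$, extends the retraction continuously to $\widehat f\colon\widehat U\to\overline H$, and reads off $g^\gamma=\widehat f(g^\gamma)=f(g)^{\widehat f(\gamma)}$, whence $g\sim f(g)$ in $\widehat U$ and therefore in $U$ by conjugacy separability. You instead stay with finite quotients and pay for this with the explicit refinement $M=M_0\cap\rho^{-1}(M_0)$ (using $\rho^2=\rho$) to make the retraction descend; in the profinite-completion version this step is absorbed into the continuous extension. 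Yours is slightly more elementary and self-contained, the paper's slightly slicker, but neither approach buys anything the other does not.
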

\begin{proof} 
To prove the theorem, we shall replace $H$ and $g$ by their conjugates in $G$ and change $\gamma$ accordingly until we achieve the statement of the theorem holding for them.
	
Since  $H$ is a virtual retract of $G$, there exist a finite index subgroup $U$ of $G$ containing $H$  and a homomorphism $f : U\rightarrow H$ such that $f_{|_H}$ is the identity map.  Moreover, $G\overline{U}= \widehat{G}$.

Hence  replacing $g$ by some conjugate in $G$ we may assume that $\gamma$  belongs to $\widehat U$.  Since $g^{\gamma}\in \overline{H}$,  $\overline{H}\leq \overline{U}$  and $U$ is closed in the profinite topology of $G$ we obtain $g\in \widehat U\cap G= U$.

Let  $\widehat{f} : \widehat{U} \rightarrow  \overline{H}$ be the continuous extension $f$. We have, $g^{\gamma}\in \overline{ H}$, so  $g^{\gamma} = \widehat{f}(g^{\gamma}) =  f(g)^{\widehat{f}(\gamma)}\in  \widehat{f}(\overline{H}) = \overline{H}$.  Then, $g$ and $f(g)$ are conjugate in $\overline{U}$. Since $U$ is conjugacy separable  by hypothesis, $g$ and $f(g)$ are conjugate in $U$ and lemma is proved.
\end{proof}

\begin{thm}\label{RFisCD} 
	Let $G$ be a finitely presented  residually free group. Then  there exists $m$ such that  all subgroups of $G$ of type $FP_m$ over $\mathbb{Q}$  are conjugacy distinguished.
\end{thm}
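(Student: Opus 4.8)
The plan is to combine the structural results on finitely presented residually free groups from \cite{BHMS} with Theorem~\ref{Thmconjdist}. The key point is to realise subgroups of type $FP_m$ over $\mathbb{Q}$ as virtual retracts of a group that is hereditarily conjugacy separable, and then transfer the conclusion back to $G$.

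\emph{Step 1: Reduce to a finite product of limit groups.} By \cite{BMR,BHMS}, a finitely presented residually free group $G$ embeds as a full subgroup of a finite direct product $\Gamma = \Gamma_1 \times \cdots \times \Gamma_n$ of limit groups, and moreover $G$ is virtually of the form described by the $1$--$2$--$3$ Theorem / the classification of finitely presented residually free groups in \cite{BHMS}. Since limit groups are hereditarily conjugacy separable (by results on conjugacy separability of limit groups, which are hyperbolic relative to their non-cyclic abelian subgroups, or cited directly), a finite direct product of limit groups is hereditarily conjugacy separable; indeed any finite-index subgroup of $\Gamma$ contains a finite-index subgroup of the product form $\prod \Gamma_i'$, and conjugacy separability passes to finite direct products and to overgroups of conjugacy separable finite-index subgroups via standard arguments. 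The point of working inside $\Gamma$ rather than $G$ is that $\Gamma$ has good subgroup structure controlled by the coordinate projections.

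\emph{Step 2: $FP_m$ subgroups are virtual retracts of $\Gamma$.} This is where the finiteness hypothesis is used and is the main obstacle. By the $n$-$(n+1)$-$(n+2)$ Theorem of Bridson--Howie--Miller--Short, a subgroup $S \le \Gamma_1 \times \cdots \times \Gamma_n$ that is of type $FP_n$ over $\mathbb{Q}$ (so we take $m = n$, or $m = \dim$ of the relevant product) is \emph{virtually coabelian} / in fact virtually a direct product, and more precisely such $S$ is a virtual retract of $\Gamma$: after passing to a finite-index subgroup of $\Gamma$ one shows $S$ (or a finite-index subgroup of it) is the full preimage of a subgroup under a projection, hence a retract. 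I would invoke the relevant theorem from \cite{BHMS} (the characterisation of subgroups of full finiteness type in products of limit groups) to get that a finite-index subgroup $S_0$ of $S$ is a retract of a finite-index subgroup of $\Gamma$ containing it; this is precisely the virtual retract condition of Definition~\ref{Def:VR}. The delicate part is matching the exact finiteness hypothesis ($FP_m$ over $\mathbb{Q}$, for a specific $m$ depending only on $G$, e.g. the number of direct factors) with the hypotheses of the structural theorem, and handling the passage between $G$-subgroups and $\Gamma$-subgroups so that being a virtual retract in $\Gamma$ yields being a virtual retract in $G$ (using that $G$ has finite index in $\Gamma$, or more carefully that $G$ is a full subgroup and virtual retracts are preserved under intersection with finite-index overgroups).

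\emph{Step 3: Apply Theorem~\ref{Thmconjdist} and descend.} Once $H$ (up to finite index) is a virtual retract of the hereditarily conjugacy separable group $\Gamma$, Theorem~\ref{Thmconjdist} gives that $H$ is conjugacy distinguished in $\Gamma$. Since $G$ has finite index in $\Gamma$ (or since $G$ is itself hereditarily conjugacy separable as a finite-index-type subgroup, so that one may run the argument internally), conjugacy distinguishedness of $H$ in $\Gamma$ restricts to conjugacy distinguishedness in $G$: a set that is closed in the profinite topology of $\Gamma$ has closed intersection with $G$, and the profinite topology of $G$ is the subspace topology here because $|\Gamma : G| < \infty$. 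Finally, conjugacy distinguished is inherited from a finite-index subgroup: if a finite-index subgroup $H_0 \le H$ has $\bigcup_g H_0^g$ closed, then $\bigcup_g H^g$ is a finite union of translates of such sets, hence closed. This yields the theorem with $m$ taken to be the number of limit-group factors in the embedding $G \hookrightarrow \Gamma_1 \times \cdots \times \Gamma_n$ (equivalently, one more than that, matching the statement's "there exists $m$").
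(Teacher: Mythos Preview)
Your overall strategy---realise $H$ as a virtual retract of the ambient product $\Gamma=\prod L_i$ and then invoke Theorem~\ref{Thmconjdist}---is exactly the paper's. The virtual retract input is \cite[Lemma~7]{BW} (your Step~2 is fine; the correct value is $m=n$, the number of factors).

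The genuine gap is in Step~3. You repeatedly assume $|\Gamma:G|<\infty$, but this is false in general: a finitely presented subdirect product of limit groups typically has \emph{infinite} index in $\Gamma$ (think of the fibre-product examples in \cite{BHMS}). Consequently the profinite topology on $G$ need not be the subspace topology from $\Gamma$, and, more seriously, ``$H$ conjugacy distinguished in $\Gamma$'' does not descend to $G$ even when the topologies agree: an element $x\in G$ can be $\Gamma$-conjugate into $H$ without being $G$-conjugate into $H$, so knowing that $\bigcup_{\gamma\in\Gamma}H^\gamma$ is closed in $\Gamma$ tells you nothing about $\bigcup_{g\in G}H^g$ in $G$. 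Your parenthetical alternative (``run the argument internally'') is the right instinct but is not carried out.

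The paper's fix is short and avoids any descent. First, if $H\le G\le L$ and $H$ is a virtual retract of $L$ via a finite-index $K\le L$ with retraction $\rho\colon K\to H$, then $K\cap G$ has finite index in $G$, contains $H$, and $\rho|_{K\cap G}$ is still a retraction onto $H$; hence $H$ is a virtual retract of $G$ itself. Second, $G$ is hereditarily conjugacy separable by \cite[Theorem~3.5]{CZForum}. Now apply Theorem~\ref{Thmconjdist} directly to the pair $(G,H)$. No comparison with $\Gamma$ is needed, and the ``up to finite index in $H$'' manoeuvre at the end of your Step~3 becomes unnecessary.
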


\begin{proof} 
Let $G$ be a finitely presented residually free group.  By  \cite[Corollary 19]{BHMS}, $G$ embeds in a direct product of finitely many limit groups $L= \prod_{i=1}^n L_i$. Choose  minimal such $n$. Without loss of generality we may assume that $G$ is a subdirect product of the $L_i$'s.
 
Let $H$ be a $FP_n$ subgroup of $G$. By \cite[Lemma 7]{BW}, $H$ is a virtual retract of  $L$.  Then  $H$ also is a virtual retract of $G$. By  \cite[Theorem 3.5]{CZForum},  $G$ is conjugacy separable.   Hence,  by Theorem \ref{Thmconjdist},  $H$ is conjugacy distinguished in $G$.
\end{proof}

\section{Finitely presented residually free groups}

The goal of this section is to prove that finitely presented residually free groups are finitely presented subgroup conjugacy separable.

\medskip 

\begin{deff}\label{full} 
Let $G_1, \ldots, G_n$ be  groups. A subgroup $H \leq G_1\times \cdots \times G_n$ is called full if $H$ intersects each factor non-trivially, i.e. $H \cap G_i\ne 1$  for all $i \in \{1, 2, \ldots, n\}$. 
\end{deff}

Notice that if $H$ is full, in particular it is nontrivial and if $H$ intersects trivially one of the factors, say $H \cap G_1= 1$, then $H$ is isomorphic to a subgroup of $G_2\times \cdots \times G_n$.

\begin{pro}\label{teo1} 
Let $G_1, G_2$ be  finitely presented subgroups of the direct product $L = \prod_{i=1}^{n}L_i$, where $L_i$ is a limit group. If $G_1$ and $ G_2$ are  conjugate in $\widehat{L}$,  then  they are conjugate in $L$.
\end{pro}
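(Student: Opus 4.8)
The plan is to prove this by induction on the number $n$ of limit-group factors, after a reduction to the case where $G_1$ and $G_2$ are full subdirect products. Replacing each $L_i$ by $\pi_i(G_1)$, which is a finitely generated subgroup of a limit group and hence again a limit group, makes $G_1$ subdirect. If $G_1\cap L_i=1$ for some $i$, then by the remark after Definition \ref{full} the group $G_1$ embeds into $\prod_{j\ne i}L_j$; projecting the hypothesis to $\prod_{j\ne i}\widehat{L_j}$ and using that $G_1$ and $G_2$ are closed in the profinite topology of $L$ (they are finitely generated virtual retracts, see the next step), one checks that $G_2\cap L_i=1$ as well, so both drop into a product of $n-1$ limit groups and the inductive hypothesis applies. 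Thus we may assume $G_1$ and $G_2$ are full and subdirect.

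Next I would record the structural input. By \cite[Corollary 19]{BHMS} and the structure theory of finitely presented subdirect products of limit groups, a finitely presented full subdirect product of limit groups is of type $FP_\infty$, in particular of type $FP_n$; hence by \cite[Lemma 7]{BW} both $G_1$ and $G_2$ are virtual retracts of $L$. On the other hand $L=\prod_{i=1}^n L_i$ is itself finitely presented and residually free, and so is every finite-index subgroup of it, whence each is conjugacy separable by \cite[Theorem 3.5]{CZForum}; thus $L$ is hereditarily conjugacy separable. In particular, by Theorem \ref{Thmconjdist} both $G_1$ and $G_2$ are conjugacy distinguished in $L$, which already forces $\bigcup_{g\in L}G_1^{\,g}=\bigcup_{g\in L}G_2^{\,g}$ (every element of one is $L$-conjugate into the other); I will use this as an intermediate fact.

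For the core argument, fix a finite-index subgroup $U\le L$ with $G_1\le U$ and a retraction $f:U\to G_1$. Since $L\,\overline U=\widehat L$, the conjugator $\gamma$ factors as $\gamma=gu$ with $g\in L$ and $u\in\overline U=\widehat U$, and replacing $G_2$ by its harmless $L$-conjugate $G_2^{\,g^{-1}}$ we may assume $\overline{G_1}^{\,u}=\overline{G_2}$ with $u\in\widehat U$; intersecting with $L$ and using that $G_2$ is closed gives $G_2\le U$. Applying the continuous extension $\widehat f:\widehat U\to\widehat{G_1}=\overline{G_1}$ to $\overline{G_1}^{\,u}=\overline{G_2}$ and using $\widehat f|_{\overline{G_1}}=\mathrm{id}$ yields $\widehat{G_1}=\overline{G_1}^{\,\widehat f(u)}=\overline{f(G_2)}$, so the finitely generated subgroup $f(G_2)\le G_1$ is dense in $\widehat{G_1}$; moreover $\overline{G_2}=\overline{G_1}^{\,u}$ meets $\ker\widehat f$ trivially (because $\overline{G_1}$ does and $\ker\widehat f$ is normal), so $G_2\cap\ker f=1$ and $f$ restricts to an isomorphism $\widehat{G_2}\cong\widehat{G_1}$. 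Grothendieck rigidity available in this setting (finitely presented residually free groups have no Grothendieck pairs among their finitely generated subgroups) then forces $f(G_2)=G_1$, so $G_1$ and $G_2$ are two complements to the normal subgroup $\ker f$ inside $U$ that are conjugate in $\widehat U$. It remains to conjugate these two complements inside $U$ itself: they correspond to (non-abelian) $1$-cocycles $G_1\to\ker f$ that become cohomologous over every finite quotient of $U$, and one promotes this to cohomology over $U$ using the hereditary conjugacy separability of $U$ together with the tight control on centralizers in products of limit groups (their goodness in the sense of Serre), which makes the comparison map on cocycles injective; conjugacy of $G_1$ and $G_2$ in $U$, hence in $L$, follows.

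The hard part is this last step. The principle ``conjugate in $\widehat L$ implies conjugate in $L$'' fails for arbitrary subgroups, and — crucially — finitely presented subgroups of products of limit groups need not be subgroup separable, so the plain density and compactness arguments that suffice to show a single subgroup is conjugacy distinguished do not close the gap. The genuine content is to isolate exactly the complement/cohomological statement above, to verify the equality $f(G_2)=G_1$, and to feed in the fine structure of limit groups (separability of centralizers, goodness, Grothendieck rigidity) needed to upgrade the finite-quotient matching of the associated cocycles to an honest conjugacy.
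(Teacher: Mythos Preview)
Your proposal has two genuine gaps and misses the key reduction that the paper uses. First, the assertion that a finitely presented full subdirect product of limit groups is of type $FP_\infty$ is false: Stallings' group is a finitely presented full subdirect product of three free groups which is not of type $FP_3$, and more generally the Stallings--Bieri groups in $F_2^{\,n}$ are finitely presented but not $FP_n$. So you cannot invoke \cite[Lemma~7]{BW} to get that $G_1$ is a virtual retract of $L$, and the retraction set-up on which the rest of your argument is built does not get off the ground. Second, even granting a retraction, your final step is not a proof: the appeals to ``Grothendieck rigidity'' of finitely presented residually free groups and to an injectivity statement for a comparison map on non-abelian $1$-cocycles are unreferenced claims, and you yourself flag this as ``the hard part''.

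The paper's route avoids all of this. After arranging that $G_1$ and $G_2$ are full (using the Hopfian property of finitely generated profinite groups rather than separability), one projects to each factor: since limit groups are subgroup conjugacy separable by \cite{CZ-16}, after an $L$-conjugation one may assume $\pi_i(G_1)=\pi_i(G_2)$ for all $i$, and by \cite{Wilton} one may replace each $L_i$ by a finite-index subgroup so that $\pi_i(G_j)$ is an actual retract. The decisive input is then \cite[Theorem~8]{BW}: a finitely presented full subdirect product of limit groups contains some term $\gamma_m(L)$ of the lower central series. Hence $\gamma_m(L)\subseteq G_1\cap G_2$ and $\gamma_m(\widehat L)\subseteq \widehat{G_1}\cap\widehat{G_2}$, so the whole question descends to the finitely generated nilpotent (hence polycyclic) quotient $L/\gamma_m(L)$, where conjugacy in the profinite completion implies conjugacy in the group by \cite[Chapter~4, Theorem~7]{Segal}. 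This lower-central-series reduction to a polycyclic quotient is the idea your proposal is missing.
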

\begin{proof}
Suppose that $G_1$ is not full in $L$, say $G_1\cap L_1=1$ and assume that $G_2$ is full. Consider $L'= L/L_1$ and note that it is a subgroup of $L$.  So $G_1\cong  G_1L_1/L_1\leq L'$ and since any finitely presented subgroup of $L'$ is separable, $\widehat{G}_1\cong \widehat{G_1L_1/L_1}\leq \widehat{ L}'$. Hence $\widehat{ L}_1\cap \widehat{ G}_1=1$ by hopfian property of a finitely generated profinite group. 

By assumption, there exists $\gamma\in \widehat{L}$ so that ${\widehat{ G}_1}^{\gamma}=\widehat{ G}_2$. It follows that 
$$
\widehat{ G_1L_1/L_1}^{\gamma}=\widehat{G_2L_1/L_1}\cong {\widehat{ G}}_2/{\widehat{ G}}_2\cap {\widehat{L}}_1
$$ 
are conjugate, and so isomorphic.  Therefore ${\widehat{G}}_1\cong {\widehat{ G}}_2/{\widehat{ G}}_2\cap {\widehat{ L}}_1$. Since  a finitely generated profinite group is hopfian, it follows that ${\widehat{L}}_1\cap {\widehat{ G}}_2= 1$.  In particular we have $L_1\cap G_2 =1$. Thus, $G_1$ is full if and only if so is $G_2$. Hence, factoring out  $L_i$ that intersect $G_1$ and $G_2$ trivially we may assume that $G_1$ and $G_2$ intersect all $L_i$
non-trivially.

Let $\pi_i$ be the natural projection from $L$ onto $L_i$. Observe that  the images $\pi_i(\widehat G_1)$ and $\pi_i(\widehat G_2)$ are conjugate in $\widehat{L_i}$, since $L_i$ is  subgroup conjugacy separable by \cite{CZ-16}; $\pi_i(G_1)$ and $\pi_i(G_2)$ are conjugate in $L_i$, i.e. $\pi_i(G_1)^{l_i}=\pi_i(G_2)$ for some $l_i\in L_i$.  Thus, conjugating $G_1$ by the element $(l_1,\dots, l_n)$ if necessary, we may assume that $\pi_i(G_1)=\pi_i(G_2)= N_i$ for all $i$.
 
Now observe that $\pi_i(G_j)$ is a finitely generated  subgroup of $L_i$. By the main result of \cite{Wilton},  $L_i$ has a virtual retract, so there exist a finite  index $K_i$ that contained $G_i$ and a homomorphism $\psi: K_i \rightarrow \pi_i(G_j)$ such that $\psi_{|_{\pi_i(G_j)}}$ is the identity map. Thus, substituting $L_i$ by $K_i$ we may assume that $L_i= K_i$, and then $N_i$ is a retract of $L_i$ for every $i$.
 
By  \cite[Theorem 8]{BW}, the finitely presented subgroups $G_1$ and $G_2$ contain some term of the lower central series $\gamma_m(L)$, then  $\gamma_m(L)\subseteq G_1\cap G_2$. Hence $\gamma_m(\widehat{L})\subseteq \widehat{G}_1\cap \widehat{G}_2$.
 
Now in  the quotient $\widehat{L}/\gamma_m(\widehat{L})$ the images of $\widehat{G}_1/\gamma_m(\widehat{L})$ and $\widehat{G}_2/\gamma_m(\widehat{L})$ are conjugate.   By Theorem 7 in Chapter 4  of \cite{Segal}, $G_1/\gamma_m(L)$ and $G_2/\gamma_m(L)$ are conjugate in $L/\gamma_m(L)$.   It follows that $G_1$ and $G_2$ are conjugate in $L$. 
\end{proof}

We denote by $N_L(G)$ the normalizer of $G$ in $L$, that is
$$
N_L(G)=\{l\in L\mid lGl^{-1}=G\}.
$$

\begin{lem}\label{lemaNdenso} 
	Let $L= \prod_{i=1}^nL_i$ be the direct product of limit groups and $G$ be a full finitely presented subgroup of $L$. Then the normalizer $N_{L}(G)$ is dense in $N_{\widehat{L}}(\widehat{G})$.
\end{lem}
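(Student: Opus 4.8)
The plan is to use that a full finitely presented subgroup $G$ of a product of limit groups contains a term of the lower central series, and then to reduce the assertion to the corresponding statement for a finitely generated nilpotent group, where it is a standard feature of the profinite topology.

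First I would invoke \cite[Theorem 8]{BW} to obtain $m$ with $\gamma_m(L)\subseteq G$, whence $\gamma_m(\widehat L)=\overline{\gamma_m(L)}\subseteq\widehat G$ (the subgroup $\gamma_m(\widehat L)$ being closed since $\widehat L$ is topologically finitely generated). Put $\bar L:=L/\gamma_m(L)$, a finitely generated nilpotent group, and $\bar G:=G/\gamma_m(L)$, and let $\varphi\colon L\to\bar L$ and $\widehat\varphi\colon\widehat L\to\widehat L/\gamma_m(\widehat L)=\widehat{\bar L}$ be the quotient maps, so that $\widehat\varphi$ extends $\varphi$ and the closure of $\bar G$ in $\widehat{\bar L}$ is $\widehat{\bar G}:=\widehat\varphi(\widehat G)$. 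Using $\gamma_m(L)\subseteq G$, $\gamma_m(\widehat L)\subseteq\widehat G$ and the normality of the $\gamma_m$-subgroups, one checks at once that $N_L(G)=\varphi^{-1}\!\bigl(N_{\bar L}(\bar G)\bigr)$ and $N_{\widehat L}(\widehat G)=\widehat\varphi^{-1}\!\bigl(N_{\widehat{\bar L}}(\widehat{\bar G})\bigr)$.

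Next I would observe that this reduction is density-preserving: $N_L(G)$ is dense in $N_{\widehat L}(\widehat G)$ if and only if $N_{\bar L}(\bar G)$ is dense in $N_{\widehat{\bar L}}(\widehat{\bar G})$. One direction is immediate because $\widehat\varphi$ is continuous and open. For the converse, given $c\in N_{\widehat L}(\widehat G)$ and an open normal $W\trianglelefteq\widehat L$, the set $\widehat\varphi(cW)$ is a neighbourhood of $\widehat\varphi(c)\in N_{\widehat{\bar L}}(\widehat{\bar G})$, so there is $\bar l\in N_{\bar L}(\bar G)\cap\widehat\varphi(cW)$; the coset $\varphi^{-1}(\bar l)\subseteq L$ is dense in $\widehat\varphi^{-1}(\bar l)$ (because $\gamma_m(L)$ is dense in $\gamma_m(\widehat L)$), so it meets the nonempty open subset $\widehat\varphi^{-1}(\bar l)\cap cW$ of $\widehat\varphi^{-1}(\bar l)$, and any $l$ in this intersection lies in $\varphi^{-1}\!\bigl(N_{\bar L}(\bar G)\bigr)\cap cW=N_L(G)\cap cW$.

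It therefore remains to prove the statement in the nilpotent case: if $\Gamma$ is finitely generated nilpotent and $H\leq\Gamma$, then $N_\Gamma(H)$ is dense in $N_{\widehat\Gamma}(\overline H)$. This is where the real content lies; it is a known property of the profinite topology on polycyclic groups, which I would cite (e.g. from \cite{Segal}). Alternatively one can give a direct proof by induction on the Hirsch length of $\Gamma$, quotienting by an isolated central infinite cyclic subgroup $A$: via the density-preserving device of the previous paragraph applied to the kernel $\overline A$, together with a finite-orbit argument relating $N_\Gamma(H)$ to $N_\Gamma(HA)$, this reduces matters to $\Gamma/A$, the delicate case being $H\cap A=1$, which one handles by first passing to the Mal'cev isolator $\sqrt H$, a subgroup of finite index over $H$ in which $N_\Gamma(H)$ has finite index in $N_\Gamma(\sqrt H)$. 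I expect this nilpotent step to be the only genuine obstacle: the reductions above are purely formal once \cite[Theorem 8]{BW} is in hand.
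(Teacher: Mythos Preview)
There is a genuine gap at the very first step. You invoke \cite[Theorem 8]{BW} to obtain $\gamma_m(L)\subseteq G$, but that result applies to a finitely presented \emph{subdirect} product of limit groups, i.e.\ one that surjects onto each factor; the hypothesis here is only that $G$ is \emph{full}, meaning $G\cap L_i\neq 1$ for every $i$. These are not the same, and the conclusion fails for full subgroups in general. For instance, take $L=F_2\times F_2$ with free bases $\{a_1,b_1\}$ and $\{a_2,b_2\}$, and let $G=\langle a_1,b_1\rangle\times\langle a_2\rangle\cong F_2\times\mathbb Z$. Then $G$ is full and finitely presented, yet $\gamma_m(L)=\gamma_m(F_2)\times\gamma_m(F_2)$ is never contained in $G$, since the second factor of $G$ is cyclic. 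So the reduction to a nilpotent quotient $L/\gamma_m(L)$ does not go through, and the rest of your argument, which is formally correct, is left without a starting point.

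The paper's proof confronts exactly this issue. It first uses that each $L_i$ is hereditarily subgroup conjugacy separable \cite{CZ-16} together with \cite[Lemma 2.3]{BZ} to get that $N_{L_i}(\pi_i(G))$ is dense in $N_{\widehat L_i}(\pi_i(\widehat G))$; this lets one replace each $L_i$ by $N_{L_i}(\pi_i(G))$, so that $\pi_i(G)\lhd L_i$. Then \cite[Theorem 1]{BH} forces each $L_i/\pi_i(G)$ to be finite or $L_i$ abelian. Only now, setting $K=\prod_i\pi_i(G)$ so that $G$ \emph{is} subdirect in $K$, does \cite[Theorem 8]{BW} give $\gamma_m(K)\leq G$, and the quotient $L/\gamma_m(K)$ is virtually polycyclic, where the density of normalizers follows from \cite[Proposition 3.3]{RSZ}. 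Your density-preserving reduction lemma is fine and is essentially the step the paper abbreviates as ``it suffices to show\ldots''; what you are missing is the preliminary work needed to land in a quotient that is actually (virtually) polycyclic.
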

\begin{proof}
	By  \cite[Theorem 2.5]{CZ-16}, $L_i$ is hereditarily subgroup conjugacy separable  and hence, by  \cite[Lemma 2.3]{BZ}, $N_{L_i}(\pi_i(G))$ is dense in $N_{\widehat{L}_i}(\pi_i(\widehat{G}))$. Therefore replacing $L_i$ by $N_{L_i}(\pi_i(G))$ we may assume that $\pi_i(G)$ is normal in $L_i$. Then,  by  \cite[Theorem 1]{BH}, either the index of  $\pi_i(G)$ in $L_i$ is finite or $L_i$ is abelian. Put $K=\prod_{i=1}^n\pi_i(G)$. By  \cite[Thereom 8]{BW}, $G\geq \gamma_m(K)$.  Since $\gamma_m(\overline{K})\leq \widehat{G} \leq \overline{N_{L}(G)}$, it suffices to show  that $N_{L/\gamma_m(K)}(G/\gamma_m(K))$   is dense in $N_{\overline{L/\gamma_m(K)}}( \overline{G/\gamma_m(K)})$. But this is the subject of \cite[Proposition 3.3]{RSZ}. The lemma is proved.
\end{proof}

\begin{thm}\label{teo2} 
	Let $G$ be a finitely presented residually free group. Then $G$ is finitely presented subgroup conjugacy separable.
\end{thm}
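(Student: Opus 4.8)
The plan is to argue by contraposition: assume that $H$ and $K$ are finitely presented subgroups of $G$ that become conjugate in every finite quotient of $G$, and deduce that they are already conjugate in $G$. First I would fix an embedding of $G$ as a subdirect product in a product of limit groups $L=\prod_{i=1}^{n}L_i$ with $n$ minimal, via \cite[Corollary 19]{BHMS}. The conjugacy hypothesis says that for every finite-index normal $N\trianglelefteq G$ the finite non-empty set of $gN\in G/N$ conjugating $HN/N$ onto $KN/N$ is non-empty; these sets form an inverse system, so a compactness argument yields $\gamma\in\widehat{G}$ with $\overline{H}^{\gamma}=\overline{K}$ in $\widehat{G}$ (two closed subgroups with the same image in every finite quotient coincide). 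Pushing forward along $\widehat{G}\to\widehat{L}$ produces an element — which I will keep calling $\gamma$ — lying in the closure $\overline{G}$ of $G$ inside $\widehat{L}$ and conjugating $\overline{H}$ onto $\overline{K}$ in $\widehat{L}$. Here I use that finitely presented subgroups of products of limit groups are separable, so that $\overline{H}$, $\overline{K}$ really are the profinite completions of $H$, $K$.

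At this point $H$ and $K$ are conjugate in $\widehat{L}$, so Proposition~\ref{teo1} applies and gives $l\in L$ with $H^{l}=K$. Before continuing I would reduce to the case that $H$ (equivalently $K$) is full in $L$: since each $L_i$ is a direct factor, hence normal, in $L$, conjugation by $l$ shows $H\cap L_i=1$ exactly when $K\cap L_i=1$, so one projects away the offending factors as in the proof of Proposition~\ref{teo1}. This reduction needs care, since projecting only replaces $H$, $K$ by isomorphic copies and a conjugacy among the copies records only the conjugacy of the coordinate projections; one has to argue that these pieces reassemble into an honest conjugacy inside $G$.

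The core step is to drag the conjugator back into $G$. From $\overline{H}^{\gamma}=\overline{K}=\overline{H^{l}}=\overline{H}^{l}$ we get $\gamma l^{-1}\in N_{\widehat{L}}(\overline{H})$, hence $l\in N_{\widehat{L}}(\overline{H})\cdot\overline{G}$ because $\gamma\in\overline{G}$. Now Lemma~\ref{lemaNdenso} (applicable since $H$ is full) says $N_L(H)$ is dense in $N_{\widehat{L}}(\overline{H})$, so $N_{\widehat{L}}(\overline{H})\cdot\overline{G}\subseteq\overline{N_L(H)\cdot G}$ and therefore $l\in\overline{N_L(H)\cdot G}$. If $N_L(H)\cdot G$ is closed in the profinite topology of $L$ we are done: then $l=ng$ with $n\in N_L(H)$, $g\in G$, and $H^{g}=H^{n^{-1}l}=H^{l}=K$, contradicting non-conjugacy in $G$. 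To obtain this closedness I would invoke \cite[Theorem 8]{BW}: the finitely presented group $G$ contains a term $\gamma_m(L)$ of the lower central series of $L$, so $N_L(H)\cdot G$ is a union of cosets of $\gamma_m(L)$ and it suffices that its image in $L/\gamma_m(L)=\prod_i L_i/\gamma_m(L_i)$ be closed. But that group is finitely generated nilpotent, hence Noetherian, so the images of $N_L(H)$ and of $G$ there are finitely generated, and the product of two finitely generated subgroups of a finitely generated nilpotent group is closed in the profinite topology — a classical fact.

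I expect the real obstacle to lie in two places. One is the reduction to full subgroups: a quotient $\prod_{i\in T}L_i$ only retains isomorphic copies of $H$ and $K$, and a conjugacy among these copies amounts to compatible conjugacies of the coordinate projections, so one must use that limit groups are hereditarily subgroup conjugacy separable (\cite{CZ-16}) to patch these together into a conjugacy inside $G$. The other is the separability of $N_L(H)\cdot G$; the reduction modulo $\gamma_m(L)$ to the finitely generated nilpotent setting is what makes this tractable, and it is worth isolating the nilpotent fact as a separate lemma. Everything else — the embedding, the compactness argument, and the passage between $\widehat{G}$, $\overline{G}$ and $\widehat{L}$ — is routine once one knows that finitely presented subgroups of products of limit groups are separable.
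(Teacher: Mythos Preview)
Your proposal matches the paper's proof essentially step for step: embed $G$ into $L=\prod_i L_i$, use Proposition~\ref{teo1} to obtain $l\in L$ with $H^{l}=K$, observe that $\gamma l^{-1}\in N_{\widehat L}(\overline H)$ so $l\in N_{\widehat L}(\overline H)\cdot\overline G$, and then combine Lemma~\ref{lemaNdenso} (density of $N_L(H)$ in $N_{\widehat L}(\overline H)$) with the closedness of $N_L(H)\,G$ obtained by reducing modulo $\gamma_m(L)\le G$ and invoking \cite{LW} in the finitely generated nilpotent quotient, to conclude $l=ng$ with $g\in G$. Your added remarks on compactness and on the identification $\widehat G\cong\overline G\subset\widehat L$ via separability are routine but correct, and your explicit attention to the fullness hypothesis of Lemma~\ref{lemaNdenso} is well placed --- the paper's printed proof writes $N_G(G_1)$ where $N_L(G_1)$ is meant and never verifies that $G_1$ is full, so your proposed reduction and the caveat you attach to it (projecting away factors passes $G$ to a proper quotient, so conjugacy there need not lift) identify a genuine wrinkle that the paper shares rather than a divergence from its approach.
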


\begin{proof} 
By  \cite[Corollary 19]{BHMS}, $G$ embeds in a direct product of finitely many limit groups $L= \prod_{i=1}^n L_i$. Choose minimal such $n$. As we argued in the proof of Proposition \ref{teo1}, without loss of generality we may assume that $G$ is a subdirect product of the $L_i$'s.
	
Let $G_1, G_2$ be finitely presented  subgroups of $G$ such that $\overline{G}_1^{\gamma} = \overline{G}_2$, for some $\gamma\in \widehat{G}$. By Theorem \ref{teo1},  $G_1$ and $G_2$ are conjugate in $L$, i.e. there exist $l\in L$ such that $G_1^l = G_2$. Hence, $\delta=\gamma l^{-1}\in N_{\widehat{G}}(\widehat{G}_1)$.  Then, $l=\delta^{-1}\gamma\in N_{\widehat{G}}(\widehat{G}_1)\widehat{G}$.
	
Now by \cite[Theorem 8]{BW},  $G\geq \gamma_m(L)$ for some $m$. Hence 	$N_{G}(G_1){G}\geq \gamma_m(L)$. Observe that $N_{G}(G_1){G}/\gamma_m(L)$ is closed in the profinite topology of $L/\gamma_m(L)$ (see \cite{LW}). Therefore $N_{G}(G_1){G}$ which is the preimage of $N_{G}(G_1){G}/\gamma_m(L)$ in $L$ is closed in the profinite topology of $L$.
	
By Lemma \ref{lemaNdenso},  $N_{G}(G_1)$  is dense in $N_{\widehat{G}}(\widehat{G}_1)$.  Therefore $\overline{N_{G}(G_1)G} =N_{\widehat{G}}(\widehat{G}_1)\widehat{G}$.  Thus $N_{\widehat{G}}(\widehat{G}_1)\widehat{G}\cap L = N_{G}(G_1){G}$,  so $l\in N_{\widehat{G}}(\widehat{G}_1)\widehat{G}\cap L $ and also $l\in N_{G}(G_1){G}$. Then we may write $l= ng_0$, where $n\in N_{G}(G_1)$ and $g_0\in G$. Therefore $G_1^{g_0}=G_2$ as desired.
\end{proof}

\begin{cor}[cf. \cite{BW}]
	Finitely presented residually free groups are conjugacy separable.
\end{cor}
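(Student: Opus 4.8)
The plan is to derive this from the subgroup conjugacy separability statement of Theorem~\ref{teo2}; it is in any case also recorded in the literature, since conjugacy separability of finitely presented residually free groups is \cite[Theorem~3.5]{CZForum} (already invoked in the proof of Theorem~\ref{RFisCD}), and compare \cite{BW}.

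First recall that a residually free group is torsion-free, since it embeds in a direct product of free groups; hence for $1\ne x\in G$ the subgroup $\langle x\rangle$ is infinite cyclic. Now take $x,y\in G$ and suppose $x^{\gamma}=y$ for some $\gamma\in\widehat{G}$; we must produce $g\in G$ with $x^{g}=y$. If $x=1$ then $y=1$ and there is nothing to prove, so assume $x\ne1$. Conjugation by $\gamma$ is a homeomorphism of $\widehat{G}$, so $\langle x\rangle^{\gamma}=\langle x^{\gamma}\rangle=\langle y\rangle$ gives $\overline{\langle x\rangle}^{\,\gamma}=\overline{\langle y\rangle}$; since $\langle x\rangle$ and $\langle y\rangle$ are finitely presented subgroups of $G$, Theorem~\ref{teo2} provides $g_0\in G$ with $\langle x\rangle^{g_0}=\langle y\rangle$. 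Now $x^{g_0}$ and $y$ both generate the infinite cyclic group $\langle y\rangle$, so either $x^{g_0}=y$ --- in which case we are done --- or $x^{g_0}=y^{-1}$.

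It remains to exclude the case $x^{g_0}=y^{-1}$ with $x\ne1$. Here $y=(x^{g_0})^{-1}=(x^{-1})^{g_0}$ while also $y=x^{\gamma}$, so $x^{-1}=x^{\gamma g_0^{-1}}$; that is, $x$ is conjugate to $x^{-1}$ in $\widehat{G}$. To reach a contradiction I would argue as in the proof of Proposition~\ref{teo1}: embed $G$ as a subdirect product in $L=\prod_{i=1}^{n}L_i$ with each $L_i$ a limit group, and push this relation down to the factors, so that $\pi_i(x)$ is conjugate to $\pi_i(x)^{-1}$ in $\widehat{L_i}$ for every $i$. Since limit groups are conjugacy separable (see \cite{CZ-16,CZForum}), $\pi_i(x)$ is conjugate to $\pi_i(x)^{-1}$ already in $L_i$; but $L_i$ is residually free, hence embeds in a direct product of free groups, and no nontrivial element of a free group is conjugate to its inverse. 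Thus $\pi_i(x)=1$ for all $i$, and as $G\le\prod_i L_i$ we conclude $x=1$, a contradiction. Hence $x^{g_0}=y$, and the corollary follows.

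I expect the last step --- ruling out $x^{g_0}=y^{-1}$ --- to be the main obstacle: subgroup conjugacy separability controls $\langle x\rangle$ only up to conjugacy, not the element $x$ itself, so one genuinely needs the additional input that residually free groups, and (through conjugacy separability of the limit factors) the profinite completions entering the argument, contain no nontrivial element conjugate to its inverse. Everything else is formal: the coordinate projections $L\to L_i$ restrict to homomorphisms $G\to L_i$ that extend continuously to $\widehat{G}\to\widehat{L_i}$ because limit groups are residually finite, and triviality of an element of $G$ is detected coordinatewise in $\prod_i L_i$.
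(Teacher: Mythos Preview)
Your proof is correct and follows the same strategy as the paper's: apply Theorem~\ref{teo2} to the cyclic subgroups $\langle x\rangle$ and $\langle y\rangle$, then resolve the $y$ versus $y^{-1}$ ambiguity using that no nontrivial element of a residually free group is conjugate to its inverse. The paper's proof is terser and does not spell out the last step; your route through the limit-group factors $L_i$ and their conjugacy separability works, though one could argue more directly via a free quotient of $G$ (which is what the paper's remark on $2$-generated subgroups and Baumslag--Solitar groups is gesturing toward), since free groups are conjugacy separable and contain no nontrivial element conjugate to its inverse.
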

\begin{proof}
Let $G$ be a finitely presented residually free group.  From the definition of being residually free, it follows that any 2-generated subgroup of $G$ is either free or free abelian. Hence, in particular $G$ does not contain non-abelian Baumslag-Solitar groups. 

It follows that  $\langle x \rangle$ and $\langle y\rangle$ are conjugate if and only if either so are $x$ and $y$ or $x$ and $y^{-1}$ and the statement follows.
\end{proof}

\section{Outer automorphisms of residually free groups} \label{sec:4}

In this section we first show that every commensurating endomorphism of a finitely generated residually free group is an inner automorphism. We combine this fact with a result of Grossman's to conclude that the outer automorphism group of a finitely presented residually free group is residually finite. Our arguments are adjustments of \cite[Section 9 and Theorem 1.6]{AMS}.

\medskip

Let  $H$ be a  subgroup of a group $G$ and let $\varphi \colon H \to G$ be a homomorphism. Then $\varphi$ is {\it commensurating} if for all  $h\in H$ there are $z\in G$ and $n,m\in \mathbb{Z}\setminus \{0\}$ such that $h^n=z\varphi(h)^m z^{-1}$.

We record the following observation.
\begin{rem}\label{rem:comm_end_of_ab} If $H$ is a free abelian group then the only commensurating endomorphisms of $H$ are endomorphisms of the form $h \mapsto h^s$ for some $s \in \mathbb{Z} \setminus \{0\}$ and for all $h \in H$.
\end{rem}

\begin{thm}\label{thm:comm_end-raag}
	Let $H$ be a finitely generated non-abelian residually free group. Then every commensurating  endomorphism $\varphi\colon H \to H$ is an inner automorphism of $H$.
\end{thm}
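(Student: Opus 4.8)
The plan is to exploit the embedding of $H$ into a direct product of limit groups together with the structure theory of such subgroups, reducing the statement to the factor groups where the analogous result is known. First I would invoke \cite[Corollary 19]{BHMS} to embed $H$ into a direct product $L = \prod_{i=1}^n L_i$ of limit groups, and after discarding redundant factors assume $H$ is a subdirect product, so each projection $\pi_i(H)$ has finite index in $L_i$ or $L_i$ is abelian (by \cite[Theorem 1]{BH}, once one knows $\pi_i(H)$ is normal — but in fact for subdirect products one can arrange $\pi_i$ surjective). Given a commensurating endomorphism $\varphi\colon H\to H$, the first key observation is that for each $i$ the composite $\pi_i\circ\varphi$ must, up to conjugacy, descend to a commensurating endomorphism of $\pi_i(H)\leq L_i$: one shows $\ker\pi_i\cap H$ is $\varphi$-invariant (or becomes so after adjusting $\varphi$ by an inner automorphism), using that the finitely many minimal normal-ish pieces $H\cap L_i$ are permuted and a commensurating map cannot mix a non-abelian free piece into an abelian one or swap distinct factors, since $h^n$ and $\varphi(h)^m$ must be conjugate.

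The second step is to handle each factor. On the abelian factors, Remark \ref{rem:comm_end_of_ab} pins the induced map down to $h\mapsto h^{s_i}$; on the non-abelian limit-group factors $L_i$ one needs the statement ``every commensurating endomorphism of a non-abelian limit group is an inner automorphism.'' Limit groups are hyperbolic relative to their maximal abelian subgroups, are CSA (maximal abelian subgroups are malnormal), and are torsion-free with all centralizers abelian; a commensurating endomorphism of such a group sends each element into a conjugate of a power of itself, and the CSA/malnormality property, combined with the fact that a non-abelian limit group has trivial center and trivial ``abelianish radical'', forces the endomorphism to be an inner automorphism. (This is exactly the kind of argument carried out in \cite[Section 9]{AMS} for the building blocks there, so I would cite or reprove it in the limit-group setting.) Once every $\pi_i\circ\varphi$ is realized by conjugation by some $c_i\in L_i$ on $\pi_i(H)$ (taking $c_i$ to be a scalar on abelian factors), conjugating $\varphi$ by $c = (c_1,\dots,c_n)\in L$ we may assume $\pi_i\circ\varphi = \pi_i$ for every $i$ with $L_i$ non-abelian, and $\pi_i\circ\varphi$ is a power map on the abelian ones.

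The third step is to globalize: I must show the $c_i$ can be chosen coherently inside $H$ and the leftover scalars on the abelian factors are forced to be $1$. Here the main leverage is that $H$ is non-abelian and finitely generated residually free, so it has a non-abelian free subgroup, and the scalars $s_i$ on the abelian coordinates are constrained by the commensurating condition applied to elements whose support meets both abelian and non-abelian factors: such an element $h$ has $h^n = z\varphi(h)^m z^{-1}$, and comparing non-abelian coordinates (where $\varphi$ acts trivially, so $\pi_j(h)^n$ is conjugate to $\pi_j(h)^m$ in a torsion-free group with abelian centralizers) forces $n = \pm m$, and then the abelian coordinates force $s_i = \pm 1$; the hopfian/torsion-free structure kills the sign, giving $s_i=1$. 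At that point $\varphi$ agrees with the identity on each projection, i.e. $\varphi(h)h^{-1}\in \bigcap_i \ker\pi_i\cap$ (something), which for a subdirect product means $\varphi$ differs from $\mathrm{id}$ by a map into the center-like part; using that $H$ has trivial center (a non-abelian subdirect product of limit groups is centerless) one concludes $\varphi$ is an automorphism, and a short Hopf/residual-finiteness argument upgrades ``$\varphi$ agrees with $\mathrm{id}$ on all finite quotients up to inner'' to $\varphi$ being inner on $H$ itself; alternatively, since $H$ is finitely presented residually free hence conjugacy separable, one shows the inner automorphism realizing $\varphi$ on $\widehat H$ already lies in $H$.

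The hard part, I expect, will be the coherence/globalization in the third step — ensuring that the coordinate-wise conjugators assemble into a single element of $L$ (rather than just of $\prod N_{L_i}(\pi_i(H))$) that moreover normalizes $H$, and ruling out the sign ambiguity and nontrivial scalars. This is where one genuinely needs that $H$ is a \emph{full} subdirect product and that the non-abelian factors ``see'' the abelian ones through elements of $H$; the lower-central-series argument (\cite[Theorem 8]{BW}, as used in Proposition \ref{teo1} and Lemma \ref{lemaNdenso}) showing $H\geq\gamma_m(L)$ is the tool that glues the coordinates together. The limit-group input (commensurating endomorphisms of non-abelian limit groups are inner) is the other essential ingredient, and depending on what is available I would either cite it or deduce it from the CSA property and relative hyperbolicity.
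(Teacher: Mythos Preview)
Your overall architecture---project $\varphi$ to each factor $L_i$, show the induced map $\varphi_i$ is inner (or a power map on the abelian factor), then globalize---is exactly the paper's. The invariance of $H\cap\ker\pi_i$ under $\varphi$ holds directly from torsion-freeness of $L_i$: if $g\in H\cap B_i$ then $\varphi(g)^m$ is conjugate into $B_i$, so $\varphi(g)\in B_i$ since $G/B_i\cong L_i$ is torsion-free. No ``adjustment by an inner automorphism'' or permutation-of-factors argument is needed, and your treatment of the factorwise step is otherwise fine.

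The real gap is in your third step. You plan to glue the coordinatewise conjugators together using $H\geq\gamma_m(L)$ from \cite[Theorem~8]{BW} and, as a fallback, conjugacy separability of $H$; but both of these require $H$ to be finitely \emph{presented}, whereas the theorem is stated for finitely \emph{generated} $H$. Even granting those hypotheses, you never actually produce an element of $H$ realizing the conjugation. The paper's argument is elementary and avoids all of this. The key idea you are missing is to choose, in each non-abelian factor, a specific element $h_i\in N_i=H\cap L_i$ with $E_{L_i}(h_i)=\langle h_i\rangle\subseteq H$ (such $h_i$ exist because $N_i$ is non-abelian). Applying the commensurating relation to $h_0h_1$, with $h_0\in H\cap L_0$, forces the abelian exponent to be $s=1$ (not merely $\pm 1$: in a residually free group $xh^nx^{-1}=h^m$ already implies $n=m$), so that $\varphi(g)=wgw^{-1}$ for all $g\in H$ with $w=w_1\cdots w_l\in L$. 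Then applying it to $h=h_1\cdots h_l\in H$, say $\varphi(h)^m=uh^nu^{-1}$ with $u\in H$, and projecting to $L_i$ gives $\pi_i(u)^{-1}w_i\in E_{L_i}(h_i)=\langle h_i\rangle$, hence $w_i=\pi_i(u)\,h_i^{t_i}$. Thus $w=\pi_0(u)^{-1}v$ with $v=u\,h_1^{t_1}\cdots h_l^{t_l}\in H$, and since $\pi_0(u)$ is central, $\varphi$ is conjugation by $v\in H$. No lower-central-series, profinite, or separability input is used.
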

\begin{proof}
	Let $H$ be a residually free group and let $H<G$, where $G$ is the direct product of limit groups. Since subgroups of limit groups are again limit groups, without loss of generality, we further assume that $H$ is a subdirect product of $G$, see \cite{BHMS}.
	
	Let $G=L_0\times L_1 \times \dots \times L_l$ be the standard factorization of $G$, where $L_0$ is the center of $G$ and $L_1,\dots,L_l$ are all non-abelian limit groups, see \cite{BMR, BHMS}.
	Observe that $L_0$ is a finitely generated free abelian group and $l\ge 1$ as $H$ is non-abelian.
	Let $\pi_i\colon G \to L_i$ denote the canonical retraction, $i=0,1,\dots,l$. We have $\pi_i(H)=L_i$.
	
	Observe that $N_i:=H \cap L_i\lhd H$ is non-trivial whenever $i =1,\dots,l$, because otherwise $H$ would embed into the direct product of $L_0 \times L_1 \times L_{i-1} \times L_{i+1} \times \dots \times L_l$, see \cite{BHMS}.
	Moreover, $N_i$ is non-abelian and hence there is an element $h_i \in N_i\setminus\{1\}$ such that $E_{L_i}(h_i)=\langle {h_i}\rangle \subseteq N_i$, $i=1,\dots,l$, where 
	$$
	E_G(h)=\left\{x \in G \mid x h^k x^{-1}=h^l \text{ for some }k,l \in \mathbb{Z}\setminus\{0\}\right\}.
	$$

Suppose that $x h^k x^{-1}=h^l$ in a residually free group $H$. Let $\psi$ be a homomorphism to a free group $F$ so that $\psi(x h^k x^{-1})=\psi(h^l)\neq 1$ (and so $\psi(h)\neq 1$). In a free group $y g^k y^{-1}=g^l$ implies $k=l$, so the same is true in any residually free group. It follows that if $H$ is residually free, then $E_H(h)=C_H(h)$.

Now consider any commensurating endomorphism $\varphi\colon H \to H$. For each $i \in \{0,1,\dots,l\}$ let $B_i\lhd G$ denote the product of all $L_j$, $j \neq i$; thus $G=L_i B_i \cong L_i \times B_i$ and $B_i= \ker \pi_i$. By the hypothesis, for any $g \in H \cap B_i$, $\varphi(g) \in H$ and $\varphi(g)^m=u g^n u^{-1} \in B_i$ for some $m,n \in \mathbb{Z}\setminus \{0\}$ and $u \in G$.
	And since $G/B_i \cong L_i$ is torsion-free, we can conclude that $\varphi(g) \in B_i$. The latter shows that $\varphi$ preserves the kernel of the restriction of $\pi_i$ to $H$, $i=0,1,\dots,l$. Therefore $\varphi$ naturally induces an endomorphism $\varphi_i\colon \pi_i(H) \to \pi_i(H)$ for $i=0,1,\dots,l$, defined by the formula $\varphi_i(\pi_i(g))=\pi_i(\varphi(g))$ for all $g \in H$.
	
	Evidently, $\varphi_i$ is a commensurating endomorphism of $\pi_i(H)$ for each $i=0,1,\dots,l$. Therefore, by Remark \ref{rem:comm_end_of_ab}, there exists $s \in \mathbb{Z} \setminus\{0\}$ such that $\varphi_0(a)=a^s$ for all $a \in \pi_0(H)$. On the other hand, if $i \neq 0$, then $L_i$ is a non-abelian limit group and $\pi_i(H)=L_i$. Therefore there exists $w_i \in L_i$ such that $\varphi_i(a)=w_i a w_i^{-1}$ for all $a \in \pi_i(H)$ (here we used the fact that $E_{L_i}(\pi_i(H))=\cap_{h\in \pi_i(H)}E_{L_i}(h)=\cap_{h\in \pi_i(H)} C_{L_i}(h) =\{1\}$), $i=1,\dots,l$.
	
	Let $\psi \in Inn(G)$ be the inner automorphism defined by $\psi(g)=wgw^{-1}$ for all $g \in G$, where $w=w_1 \cdots w_l \in G$. Let us show that the endomorphism $\varphi$ is actually the restriction of $\psi$ to $H$. The preceding paragraph implies that this is true if the abelian factor $L_0$ is trivial, because in this case for every $g \in H$ one would have $g=\pi_1(g) \cdots \pi_l(g)$, and so
	$$
	\begin{array}{rl}
		\varphi(g)=\pi_1(\varphi(g)) \cdots \pi_l(\varphi(g))= &
		\varphi_1 \left(\pi_1(g)\right) \cdots \varphi_l \left( \pi_l(g)\right)=\\
		=& \pi_1(g)^{w_1} \cdots \pi_l(g)^{w_l}=g^w.
	\end{array}
	$$
	
	On the other hand, if $L_0$ is non-trivial, then $N_0=H \cap L_0$ is also non-trivial (by the minimality of $G$). So, pick any $h_0 \in N_0\setminus\{1\}$. Let $h_1 \in N_1= H\cap L_1$ be the element constructed above. Since $\varphi$ is commensurating and $h_0h_1 \in H$, there must exist $m,n \in \mathbb{Z}\setminus\{0\}$ and $u \in H$ such that
	$$
	\varphi(h_0h_1)^m=u (h_0h_1)^n u^{-1}=h_0^n u_1 h_1^n u_1^{-1}, \text{ where } u_1=\pi_1(u) \in L_1.
	$$
	But we also have $\varphi(h_0 h_1)=\varphi_0(h_0) \varphi_1(h_1) = h_0^s w_1h_1w_1^{-1}$. Therefore
	\begin{equation*}\label{eq:left-right}
		h_0^{sm} w_1h_1^mw_1^{-1}=h_0^n u_1 h_1^n u_1^{-1}.
	\end{equation*}

	Applying $\pi_0$ and $\pi_1$ to the above equation we obtain $h_0^{sm}=h_0^n$ and $u_1^{-1}w_1 h_1^m w_1^{-1} u_1=h^n$.
	The former yields that $n=sm$; and the latter shows that $u_1^{-1}w_1 \in E_{L_1}(h_1)=\langle{h_1}\rangle$,
	in particular this element commutes with $h_1$. Thus $h_1^m=h_1^n$, and so $m=n$. Consequently, $s=1$, which implies that $\varphi(g)=w g w^{-1}=\psi(g)$ for all $g \in H$.
	If $w \in H$ then the proof would have been finished. However, this may not be the case, so one more step is needed.
	
	Let $h_i \in N_i =H \cap L_i$, $i=1,\dots,l$, be the elements constructed above so that  $E_{L_i}(h_i)=\langle{h_i}\rangle \subseteq H$,
	and set $h= h_1 \cdots h_l \in H$. By the  assumption, there exist $m,n \in \mathbb{Z}\setminus\{0\}$ and $u \in H$ such that $\varphi(h)^m=u h^n u^{-1}$. On the other hand, we know that $\varphi(h)=w h w^{-1}$.
	Combining these two equalities one gets $ w h^m w^{-1}=u h^n u^{-1}$ in $G$. Applying $\pi_i$ yields that $u_i^{-1}w_i \in E_{L_i}(h_i)=\langle{h_i}\rangle$, where $u_i=\pi_i(u) \in L_i$, for $i=1,\dots,l$. It follows that
	for every $i=1,\dots,l$, there exists $t_i \in \mathbb{Z}$ such that $w_i=u_i h_i^{t_i}$ in $L_i$. Thus, denoting $u_0=\pi_0(u) \in L_0$, we achieve
	$$
	w=w_1 \cdots w_l= u_1 h_1^{t_1} \cdots u_l h_l^{t_l}=  u_0^{-1} u h_1^{t_1} \cdots h_l^{t_l}=u_0^{-1} v,
	$$
	where the element  $v=u h_1^{t_1} \cdots h_l^{t_l}$ belongs to $H$ by construction. Since  $u_0 \in L_0$ is central in $G$, we see that $\varphi(g)=wgw^{-1}=v g v^{-1}$ for all $g \in H$, thus $\varphi$ is indeed an inner automorphism of $H$.
\end{proof}

An automorphism $\alpha \in Aut(G)$ is said to be {\it pointwise inner} if $\alpha(g)$ is conjugate to $g$ for each $g \in G$.
The set of all pointwise inner automorphisms, $Aut_{pi}(G)$ is a normal subgroup of $Aut(G)$.

In \cite{Grossman} Grossman proved that if $G$ is a finitely generated conjugacy separable group such that $Aut_{pi}(G)=Inn(G)$ then $Out(G)$ is residually finite. We thus arrive at the following

\begin{cor} \label{cor:OutGResFinite}
	Let $G$ be a finitely presented residually free group. Then $Out(G)$ is residually finite.
\end{cor}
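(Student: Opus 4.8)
The plan is to deduce this directly from Grossman's criterion in \cite{Grossman}: since $G$ is finitely presented it is in particular finitely generated, and by the Corollary to Theorem \ref{teo2} it is conjugacy separable. Hence it suffices to check the remaining hypothesis of Grossman's theorem, namely that $Aut_{pi}(G)=Inn(G)$.

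First I would dispose of the degenerate case in which $G$ is abelian. A residually free group is torsion-free (an element of finite order dies in every free quotient), so a finitely generated abelian residually free group is isomorphic to $\mathbb{Z}^n$ for some $n\ge 0$. In that case $Out(G)=Aut(G)=GL_n(\mathbb{Z})$, which is residually finite being a finitely generated linear group, and we are done. So from now on assume $G$ is non-abelian.

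The key observation is that every pointwise inner automorphism of $G$ is a commensurating endomorphism. Indeed, if $\alpha\in Aut_{pi}(G)$ then for each $g\in G$ the element $\alpha(g)$ is conjugate to $g$, so there is $z\in G$ with $g^{1}=z\,\alpha(g)^{1}z^{-1}$, which is precisely the commensurating condition with $n=m=1$. Since $G$ is a finitely generated non-abelian residually free group, Theorem \ref{thm:comm_end-raag} applies and shows that $\alpha$ is an inner automorphism of $G$. As $Inn(G)\subseteq Aut_{pi}(G)$ trivially, we conclude $Aut_{pi}(G)=Inn(G)$.

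Combining the two inputs — $G$ is a finitely generated conjugacy separable group with $Aut_{pi}(G)=Inn(G)$ — Grossman's theorem from \cite{Grossman} yields that $Out(G)$ is residually finite. There is no real obstacle here: all the substantive work has already been carried out in the Corollary to Theorem \ref{teo2} and in Theorem \ref{thm:comm_end-raag}; the only points needing attention are the elementary reduction showing that pointwise inner automorphisms fall under the hypothesis of Theorem \ref{thm:comm_end-raag}, and the separate treatment of the abelian case, after which the statement is immediate.
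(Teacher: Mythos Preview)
Your proof is correct and follows the same route as the paper, which simply combines Grossman's criterion with conjugacy separability (the Corollary to Theorem~\ref{teo2}) and Theorem~\ref{thm:comm_end-raag}. Your explicit handling of the abelian case is a detail the paper leaves tacit; note that it could also be absorbed into the main argument, since in an abelian group every pointwise inner automorphism is the identity, so $Aut_{pi}(G)=Inn(G)$ holds trivially there as well and Grossman's theorem applies directly.
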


\end{document}